\newcommand{\R}{\mathbb{R}} 
\newcommand{\N}{\mathbb{N}} 
\newcommand{\C}{\mathbb{C}}
\newcommand{\graph}{\operatorname{graph}}
\newcommand{\supp}{\operatorname{spt}}
\newtheorem{theorem}{Theorem}[section]
\newtheorem{lemma}[theorem]{Lemma}
\theoremstyle{definition}
\newtheorem{definition}[theorem]{Definition}
\newtheorem{remark}[theorem]{Remark}
\numberwithin{equation}{section}
\title{Lower regularity assumption for an Euler-Lagrange equation on the contact line of the phase dependent Helfrich energy.}
\author{Sascha Eichmann\\
Mathematisch-Naturwissenschaftliche Fakultät,\\
Eberhard Karls Universität Tübingen,\\
Auf der Morgenstelle 10,\\
D-72076 Tübingen, Germany\\
E-mail: \href{mailto:sascha.eichmann@math.uni-tuebingen.de}{sascha.eichmann@math.uni-tuebingen.de}\\
Phone: +49/7071/2976886}
\begin{document}
 \maketitle

\begin{abstract}
We examine the phase dependent Helfrich energy and show an Euler-Lagrange equation on the phase seperation line.
This result has already been observed by e.g. Jülicher-Lipowski and later Elliot-Stinner. Here we are able to lower the regularity assumption for this result down to $C^{1,1}$ for the seperation line.\\
In the proof we employ a carefully choosen test function utilising the signed distance function.
\end{abstract}
\textbf{Keywords: Canham-Helfrich energy, Euler-Lagrange equation, phase dependency}\\ 
\textbf{MSC.:} 49K10, 49Q10, 35B30, 92B99  

\section{Introduction}
\label{sec:1}
The Canham-Helfrich energy (or short Helfrich energy) is defined for a two dimensional smooth, orientable surface $S\subset \R^3$ with mean curvature $H$ by
\begin{equation}
 \label{eq:1_1}
 W_{H_0}(S) = \int_S (H-H_0)^2\, dA.
\end{equation}
Here $H_0\in\R$ is called spontaneous curvature. This energy is used in e.g. modelling the shape of lipid bilayers, see e.g. \cite{Helfrich} or red blood cells, see \cite{Canham}. In more layman's terms lipid bilayers form the boundary of depots in biological cells. These kind of depots are called vesicles. The thickness of these boundaries is usually small compared to the whole vesicle, hence modelling the shape of it by a two-dimensional surface is feasible (see figure \ref{fig_1}).
\begin{figure}
\begin{center}
 \includegraphics{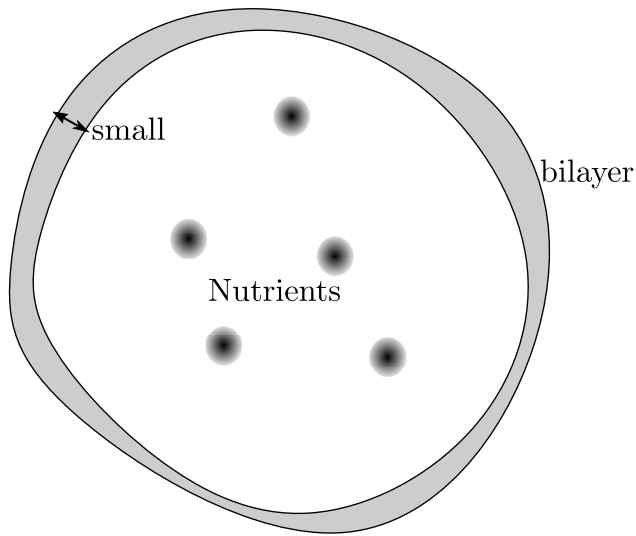}
 \end{center}
 \caption{Sketch of a vesicle with homogeneous lipid bilayer.}
 \label{fig_1}
\end{figure}

The parameter $H_0$ represents an asymmetry of the lipid bilayer. I.e. it has been observed, that it has a prefered curvature, which is dependent on the material the bilayer consists of (see e.g. \cite{LipowskiReview} and the references therein).

Since the bilayer itself does not consist of a homogeneous material, it may happen that the spontaneous curvature differs throughout the bilayer. Then it has been observed that two phases of the bilayer form, each having their own prefered spontaneous curvature. These different phases usually do not mingle, but rather form a sharp contact line (see e.g. \cite{BaumgartHessWebb} for some experimental results or \cite{HuWeiklLip} for some numerical simulations). 
The Helfrich energy has been adapted to model this kind of behaviour in \cite{JuelichLip1} and \cite{JuelichLip2}.

Let us describe this model now (cf. figure \ref{fig_2}).
\begin{figure}
\begin{center}
 \includegraphics{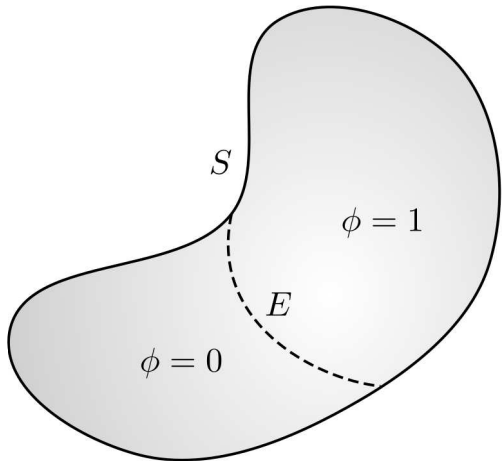}
 \end{center}
 \caption{Sketch of a surface with domain/phase seperation.}
 \label{fig_2}
\end{figure}
Let $S\subset \R^3$ be an oriented compact two dimensional surface with scalar mean curvature $H$. Furthermore we describe the two different lipid bilayers by a function $\phi:S\rightarrow\{0,1\}$, i.e. $x\in S$ belongs to the bilayer of type $i\in\{0,1\}$, iff $\phi(x)=i$. We call this type phase or domain. The spontaneous curvature is now a function depending on the phase, i.e. $H_0:\{0,1\}\rightarrow \R$. 
Since the phases have been seperated, it is natural to assume, that their contact is minimal. Hence the length of the contact line should be minimised as well (see e.g. \cite{JuelichLip1} and the references therein). For that let $J\subset S$ be the jump set of $\phi$ and let it be a one-dimensional curve (in the Hausdorff sense, see e.g. \cite[§2.1]{EvansGariepy} and \cite[§2]{Simon_Buch} for additional informations). Then the domain dependend Helfrich energy (or phase dependent Helfrich energy) is defined by
\begin{equation}
 \label{eq:1_2}
 W_{H_0,\sigma}(S,\phi) = \int_S(H(x)-H_0(\phi(x)))^2\, dA + \sigma\mathcal{H}^1(J).
\end{equation}
We call the term $\sigma\mathcal{H}^1(J)$ line tension. Here $\sigma\in\R$ is a parameter describing the contribution of the length of $J$.

The aim of this article is to calculate an Euler-Lagrange equation on $J$. The theorem for $S$ being in $C^1$ is as follows:
\begin{theorem}
 \label{1_1}
 Let $S\subset R^3$ be an oriented surface with mean curvature $H$ and phase seperation $\phi:S\rightarrow\{0,1\}$. We assume $S$ to be a $C^1$ surface.
 Furthermore $S|_{\{\phi=i\}}$ is supposed to be in $C^4$, such that the derivatives up to third order can be extended to the contact line $J$ (i.e. the jump set of $\phi$) for $i=0,1$. 
 Now we additionally assume $(S,\phi)$ to be critical for $W_{H_0,\sigma}$, i.e. for any smooth vectorfield $V\in \C^\infty_c(\R^3,\R^3)$ and their associated flow $\Phi:\R^3\times \R\rightarrow\R$ we have
 \begin{equation*}
  0=\frac{d}{dt}W_{H_0,\sigma}(\Phi(S,t),\phi(\Phi(\cdot,t))).
 \end{equation*}
Finally we assume $J$ to be of class $C^{1,1}$. 
We call $H^i$ the extension of the scalar mean curvature of $S|_{\{\phi=i\}}$ to $J$. This extension satisfies 
\begin{equation}
\label{eq:1_3}
 H^0(x) - H_0(0) = H^1(x)-H_0(1)
\end{equation}
for all $x\in J$.
\end{theorem}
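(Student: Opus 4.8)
The plan is to compute the first variation $\frac{d}{dt}\big|_{t=0}W_{H_0,\sigma}(\Phi(S,t),\phi(\Phi(\cdot,t)))$ directly, to split the bulk integral over the two phases $S_i:=S|_{\{\phi=i\}}$, and to isolate the contribution living on $J$. I write the velocity field as $V\in C_c^\infty(\R^3,\R^3)$ and decompose its restriction to $S$ into the normal part $\psi:=\langle V,\nu_S\rangle$, where $\nu_S$ is the continuous unit normal of the $C^1$ surface $S$, together with a tangential part. Since each $S_i$ is $C^4$ up to $J$, on each piece I use the classical normal first-variation identities $\delta H=\Delta_{S_i}\psi+|A|^2\psi$ and $\delta(dA)=-H\psi\,dA$, giving
\[
\delta\!\int_{S_i}(H-H_0(i))^2\,dA=\int_{S_i}\big(2(H-H_0(i))(\Delta_{S_i}\psi+|A|^2\psi)-(H-H_0(i))^2H\psi\big)\,dA.
\]
I then integrate the Laplacian term by parts only once; this is the decisive point for the regularity budget, because it exposes only $\nabla_{S_i}H$, i.e. third-order surface data, which the hypotheses guarantee extends continuously to $J$, and never the fourth-order quantity $\Delta_{S_i}H$. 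The result is the boundary integral $\int_J 2(H^i-H_0(i))\,\partial_{n_i}\psi\,ds$ along the conormal $n_i$, plus interior terms free of $\Delta_{S_i}H$.

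The core idea is to choose $V$ so that this single boundary term is the only survivor. Using the signed distance $r$ to $J$ measured inside $S$ (which is well defined and bi-Lipschitz in a tubular neighbourhood precisely because $J\in C^{1,1}$), together with the arclength parameter $s$ on $J$, I set, for an arbitrary test function $g\in C^\infty(J)$ and a fixed profile $\theta\in C_c^\infty(\R)$ with $\theta(0)=0$ and $\theta'(0)=1$,
\[
\psi_\varepsilon(s,r)=\varepsilon\,g(s)\,\theta(r/\varepsilon),
\]
and I extend $\psi_\varepsilon\,\nu_S$ off $S$ and mollify it to obtain an admissible $V_\varepsilon\in C_c^\infty(\R^3,\R^3)$. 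By construction $\psi_\varepsilon|_J=0$, hence $V_\varepsilon|_J=0$, so the line-tension variation $-\sigma\int_J\langle\kappa_J,V_\varepsilon\rangle\,ds$ vanishes identically and never invokes the curvature vector $\kappa_J$ of $J$. This is exactly what lowers the regularity requirement to $C^{1,1}$, since $\kappa_J$ exists only in $L^\infty$; earlier proofs effectively use variations with $V|_J\neq0$, which force a $\kappa_J$ term and thus $J\in C^2$. Every remaining boundary term is likewise proportional to $\psi_\varepsilon|_J=0$ and drops out, whereas $\partial_{n_i}\psi_\varepsilon|_J=\mp g$ is prescribed and nonzero.

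It then remains to let $\varepsilon\to0$. Because $\psi_\varepsilon=O(\varepsilon)$ and $\nabla_S\psi_\varepsilon=O(1)$ while both are supported in a tube of area $O(\varepsilon)$, every interior term is $O(\varepsilon)$ and disappears, whereas the surviving boundary integral is independent of $\varepsilon$. Summing the two phases and using $n_0=-n_1$, criticality yields
\[
0=\int_J 2\,g(s)\,\big[(H^1(s)-H_0(1))-(H^0(s)-H_0(0))\big]\,ds
\]
for every $g$, and the fundamental lemma of the calculus of variations gives \eqref{eq:1_3}. I expect the main obstacle to be the rigorous justification of this limit under the stated low regularity: one must check that the Fermi coordinates $(s,r)$ induced by the signed distance to a merely $C^{1,1}$ curve provide an $L^\infty$-controlled change of variables, that the extension and mollification producing $V_\varepsilon\in C_c^\infty(\R^3,\R^3)$ do not corrupt the prescribed values of $\psi_\varepsilon$ and $\partial_{n_i}\psi_\varepsilon$ on $J$ in the limit, and that the interior error terms are genuinely $O(\varepsilon)$ using only the continuity of $H$ and $\nabla_S H$ up to $J$.
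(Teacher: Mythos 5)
Your proposal is correct in substance and shares the paper's central mechanism: choose variations that vanish identically on $J$, so that the line tension contributes nothing (the curvature of $J$, which only exists in $L^\infty$, is never invoked) and the only surviving boundary term pairs the conormal derivative of the test function with the jump of $H-H_0$; then use the signed distance to the $C^{1,1}$ curve, whose $C^1$ regularity (the Foote/Federer argument, Section \ref{sec:3} of the paper) is exactly what makes a test function with prescribed normal derivative on $J$ available. The execution, however, differs genuinely in two respects. The paper localises to a graph $u:\Omega\rightarrow\R$, uses vertical variations $u+t\varphi$, integrates by parts \emph{twice}, and absorbs all bulk terms into the interior Helfrich equation, which vanishes by criticality against $C^\infty_0(A_i)$; the resulting identity \eqref{eq:2_1} is then exact for every admissible test function and no limit is needed. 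You instead work intrinsically with normal variations, integrate by parts only \emph{once} --- so only $\nabla_{S_i}H$, i.e. third-order data covered by the $C^3(\overline{A_i})$ hypothesis, ever appears and the fourth-order interior equation is never used --- and kill the interior terms through the concentrating profile $\varepsilon g(s)\theta(r/\varepsilon)$ supported in an $O(\varepsilon)$-tube. Your route is more economical in what it demands of the solution, at the price of an $\varepsilon\rightarrow 0$ limit with error estimates; the paper trades the limit for the interior equation and obtains the sharper intermediate statement \eqref{eq:2_1} along the way. A side benefit of your intrinsic formulation is that $\partial_{n_i}\psi_\varepsilon=\pm g$ directly, so you never need the positivity analysis of the prefactor $1-\langle n_E,\nabla u\rangle^2/(1+|\nabla u|^2)$ that the graph formulation requires.

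One step as written is fragile, and it is the one you flag yourself: the factor $g(s)$ depends on the ambient point through the nearest-point projection $P$ onto $J$, and for a merely $C^{1,1}$ curve $P$ is Lipschitz but in general not $C^1$, so $\psi_\varepsilon$ is only Lipschitz and the ``extend and mollify'' construction of $V_\varepsilon\in C^\infty_c(\R^3,\R^3)$ genuinely needs the verification you defer (the same caution applies to the bi-Lipschitz, but not $C^1$, Fermi chart $(s,r)$). The clean repair is precisely the paper's device in \eqref{eq:4_0}: replace $g(s)$ by an ambient smooth function $\varphi\in C^\infty_c$, i.e. build the test function as $\varphi\cdot d_\pm$ times a partition of unity, so that the only nonsmooth ingredient is $d_\pm$ itself, which is $C^1$ near $J$ by Theorem \ref{3_5}; then $\psi|_J=0$ and $\nabla\psi|_J=\varphi\,n_E$ hold exactly, no composition with $P$ occurs, and the fundamental lemma applies with arbitrary $\varphi$. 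With that substitution (keeping your scaling profile in the normal direction if you wish to retain the single integration by parts), your argument goes through and yields \eqref{eq:1_3} as claimed, using $n_0=-n_1$ along $J$, which is legitimate since $S\in C^1$ forces the two conormals to be opposite.
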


Unfortunately there are some experimental results in \cite{BaumgartHessWebb}, which indicate, that $S$ is not necessarily $C^1$, but only $C^0$. In this case \eqref{eq:1_3} becomes:
\begin{equation}
\label{eq:1_3_1}
 H^i(x) - H_0(i) = 0\mbox{ for all }x\in J\mbox{ and }i=0,1.
\end{equation}
A precise statements for $S$ being a graph is given in Theorem \ref{1_2_1}.

The equalities \eqref{eq:1_3} and \eqref{eq:1_3_1} have first been observed in \cite{JuelichLip1} and \cite{JuelichLip2} for $S$ and $J$ being axially symmetric.
Unfortunately as seen in the experiments conducted in \cite{BaumgartHessWebb} and the numeric in \cite{HuWeiklLip} this symmetry cannot be expected in general.
Equations \eqref{eq:1_3} and \eqref{eq:1_3_1} have been extended to the more general case of surfaces in \cite[Problem 2.16]{ElliottStinner_2010} (see also \cite[Eq. (4.12)]{Wutz_2017} for a more background information).
There $J$ is usually assumed to be smooth. Here we will lower this requirement to $C^{1,1}$.\\
The proof involves finding a suitable test function for the first variation. In this argument the signed distance function (and its regularity) of $J$ will be paramount. There we will follow an argument given by Foote in \cite{Foote}.\\

Existence of minimisers of the phase dependend Helfrich energy were first shown in \cite{ChoksiMorandottiVeneroni} for axially symmetric surfaces. Later in \cite{BrazdaLussardiStefanelli} this was extended to the more general class of curvature varifolds. Unfortunately regularity for this varifold minimiser and the corresponding contact line is still mainly open.
Our Theorem \ref{1_1} shows that equality \eqref{eq:1_3} (rsp. \eqref{eq:1_3_1}) has to be incorporated into such a proof, since it is a necessary condition for regularity. How to do this and if (and how) one can show \eqref{eq:1_3} for such a minimizer, are open questions for future research.

A phase field approach of the phase dependend Helfrich energy for axially symmetric surfaces and the corresponding analysis can be found in \cite{Helmers1} and \cite{Helmers2}. A parametric finite element approach to the corresponding flow of axially symmetric surfaces has been done recently in \cite{GarckeNuernberg}.

Further numerical studies for more general surfaces for the corresponding flow have been conducted in e.g. \cite{BarretGarckeNuernberg} with a phase field apprach and \cite{BarrettGarckeNuernberg_2} in a sharp interface setting. In \cite[§3]{BarrettGarckeNuernberg_2} a weaker version of the flow was formulated, which does not need as much regularity. Furthermore an analogue formulation for flows of \eqref{eq:1_3} has been given in \cite[Eq. (1.4a)]{BarrettGarckeNuernberg_2}.

Without the phase dependency the available literatur is quite vast and developing quite rapidly. A calculation of the Euler-Lagrange equation without phase dependencies has already been done in \cite{OuYang_Helfrich}. There are also several existence results, e.g. in the axially symmetric case \cite{ChoksiVeroni}, \cite{DeckDoemGrunau}, for graphs \cite{DeckGruRoe} and for compact immersions \cite{MondinoScharrer}, \cite{EichmannHelfrichClosed}. Further some other modifications to the Helfrich energy have been analysed, e.g. adding an elastic energy for the boundary in \cite{PalmerPampano}. Also some stability results in \cite{Lengeler} and \cite{BernardWheeler} are available.

If we additionally consider $H_0=0$ our energy becomes the famous Willmore energy. We refer to the survey articles \cite{KuwertSchaetzleSurvey}, \cite{GrunauWillmoreSurvey}, \cite{HellerPedit}, the seminal paper of Willmore \cite{Willmore} and the proof of the Willmore conjecture \cite{NevesMarques} in this case.

\subsection{Strategy of the proof}
\label{sec:1_1}
Since equation \eqref{eq:1_3} is of local nature, we can assume without loss of generality $S$ to be a graph of a function $u:\Omega\rightarrow\R$ with $\Omega\subset \R^2$ a smooth domain. Let us further refine the phase seperation $\phi$ in this case: Let $A_i\subset \Omega$ be the maximal open set on which $\phi(\graph u|_{A_i})=i$. For the proof let $E$ be the contact line in the parameter region $\Omega$, i.e. $E:=\Omega\setminus(A_0\cup A_1)$ (or the projection of $E$ onto $\Omega$). Then $E$ is a $C^{1,1}$ curve by our assumptions on $J$ in Theorem \ref{1_1}.
In this case we call $A_0,A_1$ a phase seperation of $\Omega$ with contact line $E$. 
The phase dependend Helfrich energy for graphs is
\begin{align}
 \label{eq:1_4}
 \begin{split}
 &W_{H_0,\sigma}(u, (A_0,A_1))\\
 :=& \int_\Omega (H_u(x)- H_0(\phi(u(x),x))^2\sqrt{1+|\nabla u(x)|^2}\, dx + \sigma\mathcal{H}^1(\graph u|_E).
\end{split}
 \end{align}

We reformulate Theorem \ref{1_1} for graphs:
\begin{theorem}
\label{1_2}
 Let $\Omega\subset \R^2$ be an open bounded set. Furthermore let $A_0,A_1\subset \Omega$ be a phase seperation of $\Omega$ with contact line $E$. Now let $u\in C^1(\Omega)$ and
 $u|_{A_i}\in C^{4}(A_i)\cap C^3(\overline{A_i})$ be a critical point of $u\mapsto W_{H_0,\sigma}(u, (A_0,A_1))$, i.e. for all $\varphi\in C^\infty_0(\Omega)$ we have
 \begin{equation*}
  0=\frac{d}{dt}W_{H_0,\sigma}(u + t\varphi, (A_0,A_1))|_{t=0}.
 \end{equation*}
Furthermore we assume $E$ to be of class $C^{1,1}$.\\
Then the derivatives of $u$ up to the order of $3$ can be extended continuously to $E$. We call this extension $u_{A_i}$ and the corresponding extension of the mean curvature of $\graph u$ is $H_{u_{A_i}}$. Then for every $x\in E$ we have
\begin{equation*}
 H_{u_{A_0}}(x) - H_0(0) =  H_{u_{A_1}}(x) - H_0(1).
\end{equation*}
\end{theorem}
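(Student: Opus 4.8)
The plan is to compute the first variation of the graph energy \eqref{eq:1_4} under a normal perturbation $u \mapsto u + t\varphi$ and to extract from the vanishing condition a boundary term concentrated on $E$. The interior part of the variation will produce the standard Euler--Lagrange operator for the Helfrich functional on each phase $A_0$ and $A_1$ separately; since $u$ is assumed critical and smooth up to the separation line on each side, the interior equation is satisfied on $A_0$ and $A_1$, and these interior contributions will not carry information about the jump. The essential content of the theorem lives in the boundary terms that arise when integrating by parts on each subdomain, together with the variation of the line-tension term $\sigma\mathcal{H}^1(\graph u|_E)$.

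**The key construction: a distance-adapted test function.**

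The main idea, and the reason the regularity of $E$ enters, is the choice of test function. I would follow the strategy announced in the introduction and build $\varphi$ from the signed distance function $d_E$ to the curve $E$. Because $E$ is only $C^{1,1}$, the function $d_E$ is $C^{1,1}$ in a tubular neighbourhood of $E$ (this is exactly the regularity statement attributed to Foote in \cite{Foote}, which I would invoke here), so its second derivatives exist almost everywhere and are bounded. I would then take a family of test functions of the form $\varphi_\varepsilon = \eta\, \psi(d_E/\varepsilon)$ with $\eta$ a smooth cutoff localising near a chosen point $x_0 \in E$ and $\psi$ a fixed profile, designed so that as $\varepsilon \to 0$ the bulk contributions vanish and only the terms supported on $E$ survive. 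The point of working with $d_E$ rather than an arbitrary parametrisation is that it makes the normal and tangential directions to $E$ explicit and keeps all the geometric quantities (in particular $\nabla u$ and the curvature of $\graph u|_E$) under control with merely $C^{1,1}$ data, where a naive parametrisation would require two genuine derivatives of $E$.

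**Passing to the limit and isolating the jump.**

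After integration by parts on $A_0$ and $A_1$, the criticality condition $\tfrac{d}{dt}W_{H_0,\sigma}(u+t\varphi,(A_0,A_1))|_{t=0}=0$ becomes, for each admissible $\varphi_\varepsilon$, an identity of the form (interior EL terms) $+$ (boundary integrals over $E$) $+$ (variation of line tension) $=0$. Using that the interior EL equation already holds, I would let $\varepsilon \to 0$ and show that the surviving boundary integral reduces to an expression in which the jump of the mean curvature across $E$ appears multiplied against the trace of $\psi$. The continuity of the third-order derivatives of $u$ up to $E$ from each side — which is part of the hypothesis — guarantees that $H_{u_{A_0}}$ and $H_{u_{A_1}}$ have well-defined traces on $E$, so the limit boundary integral is $\int_E \bigl(\,(H_{u_{A_0}}-H_0(0)) - (H_{u_{A_1}}-H_0(1))\,\bigr)\,(\text{trace factor})\,d\mathcal{H}^1$. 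Since $x_0 \in E$ and the profile $\psi$ are arbitrary, the fundamental lemma of the calculus of variations then forces the bracketed quantity to vanish pointwise on $E$, which is the claimed identity.

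**Main obstacle.**

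The hard part will be the limit analysis of the boundary terms under only $C^{1,1}$ regularity of $E$. With $d_E$ merely $C^{1,1}$, its second derivatives — which encode the curvature of $E$ and appear naturally when one differentiates $\varphi_\varepsilon$ twice (the Helfrich integrand is second order in $u$) — are only $L^\infty$ rather than continuous. I therefore expect the delicate step to be controlling the $\varepsilon$-dependent terms so that every contribution either converges to the desired trace integral or vanishes, without ever needing pointwise continuity of $\nabla^2 d_E$. One must verify that the potentially singular terms (those scaling like $1/\varepsilon$ or $1/\varepsilon^2$ from differentiating $\psi(d_E/\varepsilon)$) either cancel by the structure of the Euler--Lagrange operator or integrate to something bounded, using the coarea formula together with the boundedness of $\nabla^2 d_E$. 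Making this cancellation rigorous, rather than assuming the extra derivative of $E$ that a smooth theory would give for free, is precisely where the lowered regularity assumption is won, and is the technical heart of the argument.
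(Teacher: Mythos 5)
Your overall frame (first variation, integration by parts on each $A_i$, a test function built from the signed distance to $E$, fundamental lemma) matches the paper's announced strategy, but at the decisive step your proposal has a genuine gap, and it is exactly the step the paper is engineered to avoid. You plug a boundary-layer family $\varphi_\varepsilon=\eta\,\psi(d_E/\varepsilon)$ directly into the second-order variation and then hope that the singular terms of order $1/\varepsilon$ and $1/\varepsilon^2$ (which involve $\nabla^2 d_E$, merely $L^\infty$) ``cancel by the structure of the Euler--Lagrange operator.'' You never exhibit this cancellation, and as written the construction does not even have a finite limit: on $E$ one has $\nabla\varphi_\varepsilon=\eta\,\psi'(0)\,\nu/\varepsilon+\psi(0)\nabla\eta$, which blows up unless you rescale to $\varepsilon\,\psi(d_E/\varepsilon)$ with $\psi(0)=0$, $\psi'(0)=1$ --- at which point your family is essentially a mollification of the single test function the paper uses, with the $\varepsilon$-limit adding difficulty rather than removing it. There is a second, related gap: criticality is assumed only for $\varphi\in C^\infty_0(\Omega)$, and your $\varphi_\varepsilon$ is at best $C^{1,1}$, so plugging it into the variation needs an approximation argument you do not supply.

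The paper's route dissolves the obstacle you flag as the ``technical heart'' by ordering the steps differently: it first derives the boundary identity \eqref{eq:2_1} for \emph{smooth} test functions vanishing on $E$, integrating by parts twice so that all second derivatives land on $u$ (this is where $u|_{A_i}\in C^4(A_i)\cap C^3(\overline{A_i})$ is used), and extends it by approximation to $C^1$ test functions vanishing on $E$. Since \eqref{eq:2_1} is linear in the first-order trace $\nabla\varphi|_E$, no second derivative of the distance function is ever needed; only $d_\pm\in C^1$ near $E$ (Theorem \ref{3_5}, after Foote) is required --- note your claim that $d_E$ is $C^{1,1}$ is stronger than what the paper proves or uses. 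Then a single test function $\Phi=\varphi\sum_i d_\pm\sigma_i$ as in \eqref{eq:4_0}, which vanishes on $E$ and satisfies $\nabla\Phi|_E=\varphi\, n_E$, turns \eqref{eq:2_1} into $\int_E\varphi\,\bigl(1-\langle n_E,\nabla u\rangle^2/(1+|\nabla u|^2)\bigr)\bigl(H_{u_{A_0}}-H_0(0)-(H_{u_{A_1}}-H_0(1))\bigr)\,d\mathcal{H}^1=0$, with no $\varepsilon$-limit at all. Two further points you omit: the line-tension term drops out precisely because $\Phi$ and hence its tangential derivative vanish on $E$ (so no $\sigma$ appears in the conclusion, consistent with \eqref{eq:1_3}), and one must verify that the trace factor $1-\langle n_E,\nabla u\rangle^2/(1+|\nabla u|^2)$ is strictly positive --- the paper checks this via Cauchy--Schwarz --- before the fundamental lemma can be applied pointwise on $E$.
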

The formulation of \eqref{eq:1_3_1} as a Theorem for graphs is as follows:
\begin{theorem}
\label{1_2_1}
 Let $\Omega\subset \R^2$ be open and bounded. Furthermore let $A_0,A_1\subset \Omega$ be a phase seperation of $\Omega$ with contact line $E$. Now let $u\in C^0(\Omega)$ and
 $u|_{A_i}\in C^{4}(A_i)\cap C^3(\overline{A_i})$ be a critical point of $u\mapsto W_{H_0,\sigma}(u, (A_0,A_1))$, i.e. for all 
 \begin{align*}
  \varphi\in &\{\psi:\Omega\rightarrow\R|\ \psi|_{A_i}\in C^\infty(A_i)\mbox{ s.t. }D^k\psi\\ &\mbox{ is uniformely continuous for }k=1,2,3\mbox{ and }\operatorname{supp}\psi\subset \Omega\}
 \end{align*}
 we have
 \begin{equation*}
  0=\frac{d}{dt}W_{H_0,\sigma}(u + t\varphi, (A_0,A_1))|_{t=0}.
 \end{equation*}
Furthermore we assume $E$ to be of class $C^{1,1}$.\\
Then the derivatives of $u$ up to the order of $3$ can be extended continuously to $E$. We call this extension $u_{A_i}$ and the corresponding extension of the mean curvature of $\graph u$ is $H_{u_{A_i}}$. Then for every $x\in E$ and $i=0,1$ we have
\begin{equation*}
 H_{u_{A_i}}(x) - H_0(i) = 0.
\end{equation*}
\end{theorem}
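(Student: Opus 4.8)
The plan is to compute the first variation of $W_{H_0,\sigma}$ and to read off the claimed identity as the vanishing of the coefficient of the normal derivative of the variation along $E$. Since the statement is local near $E$, I work throughout in the graph formulation. On each phase $A_i$ write the Helfrich integrand as $F(\nabla u, D^2u) = (H_u - H_0(i))^2\sqrt{1+|\nabla u|^2}$ with $H_u = \operatorname{div}\!\big(\nabla u/\sqrt{1+|\nabla u|^2}\big)$. First I would note that testing the criticality condition with $\varphi\in C_0^\infty(A_i)$ yields the interior Euler--Lagrange (Helfrich) equation on each $A_i$; together with the assumed regularity $u|_{A_i}\in C^4(A_i)\cap C^3(\overline{A_i})$ this justifies the continuous extension of the derivatives of $u$ up to order three, and hence of $H_{u_{A_i}}$, to $E$.

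Next I would insert a general admissible $\varphi$ and integrate by parts on each $A_i$ separately. Because $F$ depends on $D^2u$, integrating the term $F_{D^2u}:D^2\varphi$ by parts twice produces, besides the interior Euler--Lagrange expression (which vanishes by the previous step), two boundary integrals over $E$: one paired with $\varphi$ and one paired with $\nabla\varphi$. Splitting $\nabla\varphi$ on $E$ into its tangential part $\partial_\tau\varphi$ and normal part $\partial_{\nu_i}\varphi$, where $\nu_i$ is the outer unit normal of $A_i$ along $E$, the coefficient of $\partial_{\nu_i}\varphi$ should reduce, after the standard simplifications, to a nonzero multiple of $H_{u_{A_i}} - H_0(i)$. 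The line tension $\sigma\mathcal{H}^1(\graph u|_E)$ depends on $u$ only through its tangential derivative $\partial_\tau u$ along $E$, so its first variation is an integral over $E$ against $\partial_\tau\varphi$ and $\varphi$ only; it contributes nothing to the coefficient of $\partial_{\nu_i}\varphi$, which is why $\sigma$ is absent from the final identity.

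The decisive feature of the $C^0$ setting is that admissible variations need not be $C^1$ across $E$: while $\varphi$ and its tangential derivative $\partial_\tau\varphi$ stay single-valued on $E$ (so that $\graph(u+t\varphi)|_E$ remains a single curve), the normal derivatives $\partial_{\nu_0}\varphi$ and $\partial_{\nu_1}\varphi$ may be prescribed independently from the two sides. To exploit this I would construct, for an arbitrary $\psi\in C^\infty(E)$ and a fixed side $i$, a test function supported in a thin one-sided tubular neighbourhood of $E$ inside $\overline{A_i}$ with $\partial_{\nu_i}\varphi = \psi$ and $\varphi|_E = 0$ on $E$, built from the signed distance function $\rho$ to $E$ and the nearest-point projection $\pi$ in the spirit of \cite{Foote}; schematically $\varphi = (\psi\circ\pi)\,\rho\,\chi(\rho)$ with $\chi$ a cutoff. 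Feeding this into the first variation and letting $\psi$ range over all smooth functions forces the coefficient of $\partial_{\nu_i}\varphi$ to vanish, i.e. $H_{u_{A_i}} - H_0(i) = 0$ on $E$; carrying this out for $i=0$ and $i=1$ independently yields the theorem.

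The hard part will be exactly this construction. For a $C^{1,1}$ curve $E$, Foote's theorem only gives $\rho\in C^{1,1}$ in a tubular neighbourhood, so the naive product $(\psi\circ\pi)\,\rho\,\chi(\rho)$ lies only in $C^{1,1}$ and fails the requirement that $D^k\varphi$ be uniformly continuous up to order three. The technical core is therefore to manufacture test functions of the required regularity on each $A_i$ while keeping exact control of $\varphi|_E$ and $\partial_{\nu_i}\varphi|_E$. Here I would use that $C^{1,1}$ provides bounded, almost-everywhere defined second derivatives of $\rho$ — equivalently bounded curvature of $E$ — and pass to the limit from smooth approximations of $E$, checking that the extra terms generated by the higher derivatives of $\rho$ either carry the harmless factor $\rho$ (and hence vanish on $E$) or are controlled uniformly by the $C^{1,1}$ bound and disappear in the limit. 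Verifying that this limiting procedure leaves the coefficient of $\partial_{\nu_i}\varphi$ untouched, so that the clean identity survives the low regularity of $E$, is the step on which the whole improvement from smooth to $C^{1,1}$ contact lines rests.
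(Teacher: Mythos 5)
Your proposal follows the same skeleton as the paper's proof: kill the bulk Helfrich operator with compactly supported variations in each $A_i$, integrate by parts twice so that the first variation collapses to the one-sided boundary identity \eqref{eq:4_1} on $E$, note that the line tension drops out because the test function vanishes on $E$, build a one-sided test function from the distance to $E$ (the paper's $d_0$ and your $(\psi\circ\pi)\,\rho\,\chi(\rho)$ are the same construction), verify the prefactor $1-\langle n_E,\nabla u\rangle^2/(1+|\nabla u|^2)>0$, and conclude with the fundamental lemma separately for $i=0,1$. Where you genuinely diverge is in what you label the hard part. The paper never needs the distance-built test function to lie in the $C^3$-type admissible class: it derives the boundary identity \eqref{eq:2_1}/\eqref{eq:4_1} for smooth admissible variations first and then extends it by approximating the \emph{test function} --- legitimate because, once the interior Euler--Lagrange equation holds, the first variation reduces to integrals over $E$ involving only $\varphi$ and $\nabla\varphi$, whose coefficients are bounded thanks to $u|_{A_i}\in C^3(\overline{A_i})$, so the identity is stable under piecewise $C^1$ convergence. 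Consequently the only regularity of $d_0$ ever used is Theorem \ref{3_5} (Foote): the one-sided distance is genuinely $C^1$ up to and on $E$ with gradient $\pm n_E$ there --- no second derivatives of $\rho$, no curvature control, no smoothing of $E$. Your alternative of mollifying the \emph{curve} and passing to the limit can likely be made to work (the $C^{1,1}$ bound gives a uniform tubular neighbourhood, and the extra boundary terms paired with $\varphi_\epsilon$, which no longer vanishes exactly on the true $E$, carry only bounded coefficients built from third derivatives of $u$), but it is strictly heavier than necessary, and precisely the convergence checks you defer --- the traces of $\varphi_\epsilon$ and $\nabla\varphi_\epsilon$ on the true $E$ --- are what the density-in-the-test-function argument dispenses with. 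One further small inaccuracy: the continuous extension of the derivatives of $u$ up to order three to $E$ is not a consequence of the interior equation but is directly the hypothesis $u|_{A_i}\in C^3(\overline{A_i})$. In short: correct plan and the same mechanism as the paper, with an unnecessarily laborious and only sketched resolution of the admissibility issue; replacing the smoothing of $E$ by approximation of the test function closes the proof cleanly.
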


The proof of Theorem \ref{1_2} requires two ingredients: First we calculate an Euler-Lagrange equation in section \ref{sec:2}, which will give us a condition on the boundary with test functions in $C^1$. The other step is showing the existence of a suitable test function itself, which will boil down to showing the signed distance function for $E$ is $C^1$ close to and on $E$. We will demonstrate this in section \ref{sec:3}. 
Section \ref{sec:4} is dedicated to bringing all arguments together to finish the proof. There we also explain the changes needed to obtain Theorem \ref{1_2_1}

\begin{remark}
 \label{1_3}
 Since $E$ is of Class $C^{1,1}$ the corresponding phase seperation $A_0,A_1$ of $\Omega$ satisfies
 \begin{equation*}
  \mathcal{L}^2(\Omega\setminus (A_0\cup A_1))=0\mbox{ and }A_0\cap A_1=\emptyset.
 \end{equation*}
This will allow us to seperate the area integral in \eqref{eq:1_4} in two distinct parts. 
\end{remark}


\section{A necessary condition for criticality}
\label{sec:2}
Under the assumptions of Theorem \ref{1_2}, we will show the following Euler-Lagrange equation on $E$ (see e.g. \cite{ElliottStinner_2010}) for a function $\varphi\in C^1_0(\Omega)$, which is zero on $E$.
\begin{align}
\label{eq:2_1}
\begin{split}
 &0=\int_E\left(\langle \nabla \varphi,n_E\rangle - \sum_{i,j=1}^2\frac{(n_E)_i\partial_j \varphi\partial_i u \partial_j u }{1+|\nabla u|^2} \right)\\&\cdot\left(H_{u_{A_0}} - H_0(0) - (H_{u_{A_1}} - H_0(1))\right)\, d\mathcal{H}^1.
 \end{split}
\end{align}
Here $n_E$ is a normal of $E$. In case the $A_i$ are only Caccioppoli it is the corresponding measure theoretic normal (see \cite[p. 169]{EvansGariepy}).
By Remark \ref{1_3} we have $\mathcal{L}^2(\Omega\setminus(A_0\cup A_1))=0$ and $A_0\cap A_1=\emptyset$.
Hence
\begin{align*}
W_{H_0,\sigma}(u,(A_0,A_1)) =& \sigma\mathcal{H}^1(\graph u|_E)\\
& + \sum_{i=1}^2\int_{A_i} \left(H_u - H_0(i)\right)^2\sqrt{1+|\nabla u|^2}\,dx
\end{align*}
and therefore we only need to calculate the derivative of the bulk term on $A_i$.
First we assume $\varphi$ to be smooth. Afterwards we get our result by approximation.
Let us start with the mean curvature of the variation (see e.g. \cite[Eq. (8)]{DeckGruRoe} for a formula for the mean curvature of graphs):
\begin{align*}
 H_{u+t\varphi} =& \sum_{i=1}^2\partial_i\left(\frac{\partial_i(u+t\varphi)}{\sqrt{1+\sum_j(\partial_ju + t\partial_j\varphi)^2}}\right)\\
 =& \sum_{i=1}^2\frac{\partial_i^2(u+t\varphi)}{\sqrt{1+|\nabla(u+t\varphi)|^2}}\\
 & - \sum_{i=1}^2\frac{\partial_i(u+t\varphi)\sum_{j=1}^2(\partial_ju+ t\partial_j\varphi)(\partial_i\partial_j u + t\partial_i\partial_j\varphi)}{\left(1+\sum_{j=1}^2(\partial_j u + t\partial_j\varphi)^2\right)^\frac{3}{2}}
\end{align*}
Hence
\begin{align*}
 \frac{d}{dt}H_{u+t\varphi}|_{t=0}=& \sum_i\frac{\partial_i^2 \varphi}{\sqrt{1+|\nabla u|^2}} - \sum_{i,j}\frac{\partial_i^2u\partial_j\varphi\partial_ju}{(1+|\nabla u|^2)^\frac{3}{2}}\\
 &-\sum_{i,j}\frac{\partial_i\varphi\partial_j u\partial_{i}\partial_j u + \partial_iu\partial_j\varphi\partial_i\partial_j u + \partial_i u\partial_j u \partial_i\partial_j \varphi}{(1+|\nabla u|^2)^\frac{3}{2}}\\
 & + \sum_{i,j,k}\frac{3\partial_i u \partial_j u \partial_i\partial_j u\partial_k\varphi\partial_k u}{(1+|\nabla u|^2)^\frac{5}{2}}.
\end{align*}
Since we assume boundedness for $u$ and the derivatives, the dominated convergence theorem yields
\begin{align*}
 &\frac{d}{dt}\int_{A_i} |H_{u+t\varphi} - H_0(i)|^2\sqrt{1+|\nabla (u +t\varphi)|^2}\, dx|_{t=0}\\
 =& \int_{A_i}2(H_u-H_0(i))\frac{d}{dt}(H_u+t\varphi)|_{t=0}\sqrt{1+|\nabla u|^2}\\
 & + |H_u - H_0(i)|^2\frac{\langle \nabla u,\nabla \varphi\rangle}{\sqrt{1+|\nabla u|^2}}\, dx
\end{align*}
Now using partial integration twice, we obtain as the bulk term the usual Helfrich equation, which is zero by only utilising test functions from $C^\infty_0(A_i)$, and the rest of the boundary terms. Almost all of these boundary terms are zero, because $\varphi=0$ on $E$. Only the ones remain, where originally the second derivatives of $\varphi$ were present:
\begin{align*}
 &\frac{d}{dt}\int_{A_i} |H_{u+t\varphi} - H_0(i)|^2\sqrt{1+|\nabla (u +t\varphi)|^2}\, dx|_{t=0}\\
 =& 2\int_E \left(\langle\nabla \varphi, n_E\rangle - \sum_{i,j}\frac{\partial_j\varphi\partial_i u\partial_j u (n_E)_i}{1+|\nabla u|^2}\right)(H_{u_{A_i}}-H_0(i))\, d\mathcal{H}^1\lfloor E.
\end{align*}
Since we have this term twice with opposite signs (we use the same normal $n_E$ for both $A_0$ and $A_1$ in the partial integration), we obtain \eqref{eq:2_1}.

\section{The signed distance function}
\label{sec:3}
In this section let $c:[a,b]\rightarrow \R^2$ be an injective, regular curve with $c\in C^{1,1}$ and being parametrised by arclength.
We will show, that there exists a neighbourhood of $c$, such that the signed distance function is in $C^1$.
We will follow the presentation of \cite{Foote}. First we start by showing that the projection is well defined:
\begin{lemma}
 \label{3_1}
 For all $x\in c((a,b))$ exists a $\delta>0$ such that the Projection 
 \begin{equation*}
  P:B_{\frac{\delta}{2}}(x)\rightarrow c((a,b))\cap B_\delta(x)
 \end{equation*}
is well defined, i.e. for all $y\in B_{\frac{\delta}{2}}(x)$ the nearest point on the curve $c$ is unique.
\end{lemma}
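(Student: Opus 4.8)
The plan is to reduce everything to the strict convexity of the squared-distance function along the curve, exploiting that a $C^{1,1}$ arclength parametrisation has a uniformly bounded (generalised) curvature. Write $x = c(s_0)$ with $s_0 \in (a,b)$, let $L$ be the Lipschitz constant of $c'$, and for $y \in \R^2$ set $g_y(s) := |y - c(s)|^2$. Since $c'$ is Lipschitz, $g_y$ is $C^{1,1}$ in $s$, with $g_y'(s) = -2\langle y - c(s), c'(s)\rangle$ and, for almost every $s$,
\[
 g_y''(s) = 2|c'(s)|^2 - 2\langle y - c(s), c''(s)\rangle = 2 - 2\langle y - c(s), c''(s)\rangle,
\]
where $|c''| \le L$ a.e. and $|c'| = 1$ by the arclength assumption. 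The point of the whole argument is the lower bound $g_y''(s) \ge 2 - 2L\,|y - c(s)|$, which is strictly positive as soon as $y$ and $c(s)$ are close enough to $x$.

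First I would localise. Using that $c$ is injective and continuous on the compact interval $[a,b]$, hence a homeomorphism onto its image, I can pick $\epsilon > 0$ with $[s_0 - \epsilon, s_0 + \epsilon] \subset (a,b)$ and so small that $|c(s) - x| < \tfrac{1}{4L}$ for $|s - s_0| \le \epsilon$, and then set $\rho := \min\{|c(s) - x| : |s - s_0| \ge \epsilon\} > 0$. Choosing $\delta < \min\{\rho, \tfrac{1}{2L}\}$ guarantees two things: every point of the curve lying in $B_\delta(x)$ is parametrised by some $s \in (s_0 - \epsilon, s_0 + \epsilon)$, and for $y \in B_{\delta/2}(x)$ and all $s$ in that interval one has $|y - c(s)| \le |y - x| + |x - c(s)| < \tfrac{1}{2L}$, so that $g_y''(s) \ge 1 > 0$ almost everywhere on $(s_0 - \epsilon, s_0 + \epsilon)$. (If $L = 0$ the curve is a segment and $g_y'' \equiv 2$, so the same conclusion is trivial.)

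Next I would settle existence and confinement of the nearest point. Since $c([a,b])$ is compact, $\operatorname{dist}(y, c)$ is attained; and because $x \in c((a,b))$, any nearest point $c(\bar s)$ satisfies $|y - c(\bar s)| \le |y - x| < \delta/2$, whence $|x - c(\bar s)| < \delta$ by the triangle inequality. Thus every minimiser lies in $B_\delta(x) \cap c((a,b))$ and, by the localisation step, has parameter $\bar s \in (s_0 - \epsilon, s_0 + \epsilon)$. It then remains to prove that $g_y$ has a unique minimiser over $(s_0 - \epsilon, s_0 + \epsilon)$. Here the care is that $c''$ exists only almost everywhere, so I cannot use $g_y'' > 0$ pointwise; instead $g_y'$ is Lipschitz, hence absolutely continuous, and integrating the a.e. bound gives $g_y'(s_2) - g_y'(s_1) = \int_{s_1}^{s_2} g_y'' \ge (s_2 - s_1) > 0$ for $s_1 < s_2$. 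Thus $g_y'$ is strictly increasing and $g_y$ strictly convex on the interval, which forces at most one minimiser; combined with the confinement statement this shows the global nearest point is unique, so the resulting map $P$ is exactly the claimed projection.

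I expect the main obstacle to be the bookkeeping in the localisation step — choosing $\epsilon$ and $\delta$ so that the global minimiser is genuinely trapped inside the single interval $(s_0-\epsilon, s_0+\epsilon)$ on which convexity holds — together with the need to replace pointwise second-derivative reasoning by the absolute-continuity argument, since under the mere $C^{1,1}$ hypothesis $c''$ is only an $L^\infty$ function. Injectivity is essential precisely to rule out that a far-away piece of the curve re-enters $B_\delta(x)$ and produces a competing nearest point outside the convex window.
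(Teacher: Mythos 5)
Your proof is correct, and it takes a genuinely different route from the paper. The paper argues by contradiction at shrinking scales: assuming non-uniqueness, it takes points $y_n\in B_{\frac{1}{2n}}(x)$ with two distinct foot points $c(t_n^1),c(t_n^2)$, writes $c(t_n^i)-y_n=\lambda_n\nu(t_n^i)$ with a Lipschitz unit normal $\nu$ (after a topological step asserting that $c$ separates the small ball, so both $\lambda_n^i$ have the same sign), subtracts the two identities and applies the mean value theorem componentwise to get $(1-\varepsilon)\,|t_n^1-t_n^2|\leq L\lambda_n|t_n^1-t_n^2|\leq \frac{L}{n}|t_n^1-t_n^2|$, a contradiction. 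You instead prove a direct, quantitative strict-convexity statement for $g_y(s)=|y-c(s)|^2$: using $|c'|=1$ and $|c''|\leq L$ a.e., you get $g_y''\geq 2-2L|y-c(s)|\geq 1$ a.e. on a localised parameter window, and you correctly replace pointwise second-derivative reasoning by integrating the a.e. bound via the absolute continuity of the Lipschitz function $g_y'$ --- the one technical point that genuinely needs care under the mere $C^{1,1}$ hypothesis. Your localisation and confinement steps (injectivity plus compactness giving $\rho>0$, and the triangle inequality trapping every global minimiser in the convex window) are sound, including the exclusion of endpoint minimisers. What your approach buys is an explicit admissible radius $\delta<\min\{\rho,\tfrac{1}{2L}\}$, i.e.\ an effective lower bound on the reach, and it avoids both the choice of normal and the separation claim that the paper supports only by a figure; what the paper's argument buys is proximity to Federer's positive-reach viewpoint (cf.\ its Remark on \cite{RatajZaj}), working purely with the Lipschitz modulus of the unit normal and never invoking $c''$.
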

\begin{proof}
 We proceed by contradiction and assume we find a sequence $y_n\in B_{\frac{1}{2n}}(x)$ and $t_n^1<t_n^2\in (a,b)$, such that
 \begin{equation*}
  \operatorname{dist}(y_n, c((a,b))\cap B_{\frac{1}{n}}(x)) = |c(t_n^1)-y_n| = |c(t^2_n)-y_n|
 \end{equation*}
For $i=1,2$ we have
\begin{align}
\begin{split}
\label{eq:3_0_1}
 0=& \partial_t|c(t)-y_n|^2|_{t=t_n^i}\\
 =& 2\langle\dot{c}(t_n^i),c(t_n^i)\rangle - 2\langle \dot{c}(t_n^i),y_n\rangle
\end {split}
 \end{align}
and therefore $\dot{c}(t_n^i)$ is orthogonal to $c(t_n^i)-y_n$. 
Let $\nu:(a,b)\rightarrow \mathbb{S}^1$ be a choosen lipschitz continuous unit normal of $c$ with Lipschitz constant $L>0$.
Then we find $\lambda_n^i$, satisfying
\begin{equation*}
 c(t_n^i)-y_n = \lambda_n^i \nu(t_n^i) 
\end{equation*}
and therefore
\begin{equation*}
 \lambda_n^i = \pm \operatorname{dist}(y_n, c((a,b))\cap B_{\frac{1}{n}}(x)).
\end{equation*}
If we choose $n$ big enough, the curve $c$ seperates $B_{\frac{1}{n}}(x)$ into two connected regions (see Figure \ref{fig_3}), since $c\in C^{1,1}$.
\begin{figure}
\begin{center}
 \includegraphics{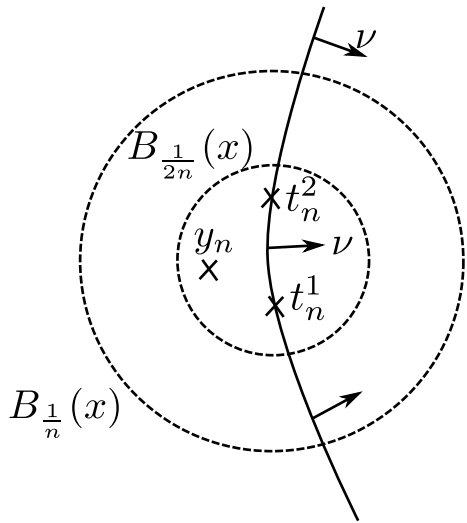}
 \end{center}
 \caption{Sketch of nonuniqueness situation with normals.}
 \label{fig_3}
\end{figure}

Hence we can assume without loss of generality
\begin{equation*}
 \lambda_n^i =  \operatorname{dist}(y_n, c((a,b))\cap B_{\frac{1}{n}}(x)).
\end{equation*}
Hence $0<\lambda_n^i\leq \frac{1}{n}$, independent of $i$ (i.e. $\lambda_n=\lambda_n^1=\lambda_n^2$) and furthermore
\begin{equation*}
 c(t_n^i)-y_n=\lambda_n \nu(t_n^i).
\end{equation*}
Subtracting these two equations yields
\begin{equation*}
 c(t_n^1) -c(t_n^2) = \lambda_n \nu(t_n^1) - \lambda_n\nu(t_n^2). 
\end{equation*}
By the mean value theorem we find $\xi_{n,1},\xi_{n,2}\in (t_n^1,t_n^2)$ with
\begin{equation*}
 \left|\left(\begin{array}{c}\dot{c}^1(\xi_{n,1})\\ \dot{c}^2(\xi_{n,2})\end{array}\right)\right||t_n^1-t_n^2| = \lambda_n|\nu(t_n^1)-\nu(t_n^2)|\leq  L\lambda_n|t_n^1 - t_n^2|.
\end{equation*}
Since $c$ is parametrised by arclength and $|t_n^1-t_n^2|\rightarrow 0$, we have for $n\rightarrow\infty$
\begin{equation*}
 \left|\left(\begin{array}{c}\dot{c}^1(\xi_{n,1})\\ \dot{c}^2(\xi_{n,2})\end{array}\right)\right|\rightarrow 1.
\end{equation*}
Hence there is a constant $\varepsilon>0$ and $n_0=n_0(\varepsilon)\in\N$ such that for every $n\geq n_0$ we have
\begin{equation*}
 \left|\left(\begin{array}{c}\dot{c}^1(\xi_{n,1})\\ \dot{c}^2(\xi_{n,2})\end{array}\right)\right|\geq 1-\varepsilon.
\end{equation*}
All in all we get
\begin{equation*}
 1-\varepsilon \leq \frac{1}{n}L
\end{equation*}
for every $n\geq n_0$, which is a contradiction.
\end{proof}

\begin{remark}
 \label{3_1_1}
 Sets on which the projection is locally unique are called sets with positive reach. This notion goes back to \cite{FedererCurvature} and has been characterised very well recently in \cite{RatajZaj} (see also the references therein). In our case the arguments are a lot simpler, hence we provided them for the sake of completeness in Lemma \ref{3_1}.\\
 The characterisation in \cite{RatajZaj} is essentially a $C^{1,1}$ regularity assumption for the set in question. In this sense our method is optimal, though we do not know, whether the regularity assumption of $E$ in our theorems is optimal as well.
\end{remark}

The next lemma shows, that the projection $P$ is continuous. The proof given here is by Federer \cite[4.8(4)]{FedererCurvature}
\begin{lemma}[see \cite{FedererCurvature}]
 \label{3_2}
 Let $x\in c((a,b))$ and $\delta>0$ as in Lemma \ref{3_1}. Then the projection $P$ on the curve $c$
 \begin{equation*}
  P:B_{\frac{\delta}{2}}(x)\rightarrow c((a,b))\cap B_\delta(x)
 \end{equation*}
is continuous.
\end{lemma}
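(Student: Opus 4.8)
The plan is to argue by a compactness-and-subsequence argument, exploiting the uniqueness of the nearest point established in Lemma \ref{3_1}. Fix $y\in B_{\frac{\delta}{2}}(x)$ together with a sequence $y_n\to y$, where each $y_n\in B_{\frac{\delta}{2}}(x)$; I want to show $P(y_n)\to P(y)$. First I would observe that the points $P(y_n)$ all lie in the bounded set $\overline{B_\delta(x)}$, so along a subsequence (which I shall not relabel) we may assume $P(y_n)\to p$ for some $p\in\overline{c((a,b))\cap B_\delta(x)}$. The goal is to identify $p$ with $P(y)$. Since this identification will hold for every convergent subsequence, the standard subsequence principle then forces the full sequence $P(y_n)$ to converge to $P(y)$, which is exactly continuity of $P$.

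The second step exploits the distance function. Set $d(z):=\operatorname{dist}(z,c((a,b))\cap B_\delta(x))$. This is $1$-Lipschitz, hence continuous, because the distance to a fixed nonempty set agrees with the distance to its closure and is thus unaffected by the set failing to be closed. By the defining property of $P$ from Lemma \ref{3_1} we have $|P(y_n)-y_n|=d(y_n)$ for every $n$, and passing to the limit gives $|p-y|=d(y)$. In other words, $p$ realises the minimal distance from $y$ to the admissible piece of the curve, and it only remains to check that $p$ is genuinely an admissible competitor so that uniqueness applies.

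The main obstacle, and the point deserving the most care, is precisely this admissibility: I must rule out that the limit $p$ escapes to the boundary $\partial B_\delta(x)$ or to an endpoint of the curve, and confirm $p\in c((a,b))\cap B_\delta(x)$. To pin down its position, note that $x\in c((a,b))\cap B_\delta(x)$ is itself a competitor, so $d(y)\le|y-x|<\frac{\delta}{2}$; consequently $|p-x|\le|p-y|+|y-x|=d(y)+|y-x|<\frac{\delta}{2}+\frac{\delta}{2}=\delta$, which places $p$ in the open ball $B_\delta(x)$ rather than on its boundary. On the other hand, continuity of $c$ on the compact interval gives $p\in\overline{c((a,b))}=c([a,b])$, so $p=c(s)$ for some $s\in[a,b]$. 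Choosing $\delta$ sufficiently small from the outset—legitimate since $x=c(t_0)$ with $t_0\in(a,b)$ interior, so the endpoints $c(a),c(b)$ remain outside $\overline{B_\delta(x)}$—excludes $s\in\{a,b\}$. Thus $p=c(s)$ with $s\in(a,b)$ and $p\in B_\delta(x)$, making $p$ an admissible nearest point of $y$. Uniqueness in Lemma \ref{3_1} then yields $p=P(y)$, which completes the argument.
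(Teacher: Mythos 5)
Your proof is correct and takes essentially the same route as the paper's: extract a convergent subsequence of $P(y_n)$ from the compact set $\overline{B_\delta(x)}$, use the Lipschitz continuity of the distance function to conclude that the limit point realises $\operatorname{dist}(y,c((a,b)))$, and invoke the uniqueness from Lemma \ref{3_1} to identify the limit with $P(y)$. The only differences are cosmetic: you argue directly via the subsequence principle where the paper phrases it as a contradiction, and you additionally rule out that the limit is an endpoint $c(a)$ or $c(b)$ by shrinking $\delta$ — a detail the paper's proof passes over silently when it applies uniqueness to $a\in\overline{B_\delta(x)}\cap c([a,b])$.
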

\begin{proof}
 Let us assume the opposite. Thereby we find a sequence $y_i\in B_\frac{\delta}{2}(x)$ with $y_i\rightarrow y\in B_\frac{\delta}{2}(x)$
  and an $\varepsilon>0$ such that
  \begin{equation*}
   |P(y)-P(y_i)|\geq \varepsilon
  \end{equation*}
for every $i\in\N$.
Since $P(y_i)\in B_\delta(x)$ the sequence $P(y_i)$ is bounded. After choosing a subsequence and relabeling, we get an $a\in \overline{B_\delta(x)}\cap c([a,b])$ with $P(y_i)\rightarrow a$.
Furthermore the distance function is Lipschitz and therefore we obtain
\begin{equation*}
 \frac{\delta}{2}>\operatorname{dist}(y,c((a,b))) = \lim_{i\rightarrow\infty}\operatorname{dist}(y_i,c((a,b))) = \lim_{i \rightarrow\infty}|P(y_i)-y_i| = |a-y|.
\end{equation*}
Hence $a\in B_\delta(x)$.
By Lemma \ref{3_1} the projection is unique and we get
\begin{equation*}
 a=P(y).
\end{equation*}
This yields by the convergence of $P(y_i)\rightarrow a$
\begin{equation*}
 |P(y) - P(y_i)| = |a-P(y_i)|\rightarrow 0
\end{equation*}
and therefore the desired contradiction.
\end{proof}

Let us now define a signed distance function:
\begin{definition}
 \label{3_3}
 Let $x\in c((a,b))$ and $\delta>0$ as in the preceeding lemmas. Then for every $y\in B_\frac{\delta}{2}(x)$ the projection is unique and there exists a unique $\lambda_y\in \R$ (cf. \eqref{eq:3_0_1}), such that
 \begin{equation*}
  P(y)-y = \lambda_y \nu(P(y)).
 \end{equation*}
Here $\nu$ is an a priori choosen Lipschitz continuous unit normal of $c$.
Then we define a signed distance function $d_\pm:B_\frac{\delta}{2}(x)\rightarrow\R$ by
\begin{equation*}
 d_\pm(y)= \left\{\begin{array}{cc}\operatorname{dist}(y,c((a,b))),&\mbox{ if } \lambda_y\geq0 \\
                  -\operatorname{dist}(y,c((a,b))),&\mbox{ if }\lambda_y < 0.
                 \end{array}\right.
\end{equation*}
\end{definition}

\begin{remark}
 \label{3_4}
The sign of $d_\pm$ switches, if the other unit normal is choosen. 
\end{remark}

Now we can show the central result of this section. For the readers benefit we provide the proof already given in \cite[Thm. 2]{Foote} here as well.
\begin{theorem}[see \cite{Foote} Thm. 2]
 \label{3_5}
 Let $\nu$ be a lipschitz continuous unit normal of $c$. Then for every $x\in c((a,b))$ there exists a $\delta>0$, 
 such that the signed distance function choosen with respect to $\nu$ satisfies $d_\pm\in C^1(B_\frac{\delta}{2}(x))$. Furthermore the derivative satisfies
 \begin{equation*}
  \nabla d_\pm(y) = \left\{\begin{array}{cc}\frac{y-P(y)}{|y-P(y)|},&\mbox{ if } \lambda_y>0\\
                            -\frac{y-P(y)}{|y-P(y)|},&\mbox{ if } \lambda_y<0\\
                            \nu(y),&\mbox{ if }y\in c((a,b)).
                           \end{array}\right.
 \end{equation*}
Here $\lambda_y$ is defined as in Definition \ref{3_3}.
\end{theorem}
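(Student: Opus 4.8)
The plan is to prove both assertions—that $d_\pm\in C^1(B_{\frac{\delta}{2}}(x))$ and that its gradient is the claimed expression—by treating points off the curve and points on the curve separately, and then checking that the two gradient formulas glue together continuously. The $\delta>0$ I would take is the one furnished by Lemma \ref{3_1} and Lemma \ref{3_2}. Throughout I would exploit the elementary consequence of Definition \ref{3_3} that, writing $r(y):=\operatorname{dist}(y,c((a,b)))$ and $p:=P(y)$, one has $r(y)=|\lambda_y|$ and, taking the inner product of $P(y)-y=\lambda_y\nu(P(y))$ with the unit vector $\nu(P(y))$,
\[
 d_\pm(y)=\lambda_y=\langle P(y)-y,\nu(P(y))\rangle .
\]
This identity is what lets me bypass the (generally nondifferentiable) square root right on the curve.

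Off the curve I would first establish differentiability of the squared distance $r^2$ at every $y$ by a two-sided comparison. For the upper bound I use that $p=P(y)\in c$ is a competitor for a nearby point $y'$, giving $r(y')^2\le|y'-p|^2$; symmetrically $r(y)^2\le|y-P(y')|^2$ gives the lower bound. Expanding both squares and invoking continuity of the projection (Lemma \ref{3_2}), so that $P(y')\to P(y)$ as $y'\to y$, the two inequalities sandwich the increment and yield
\[
 r(y')^2-r(y)^2=2\langle y-P(y),y'-y\rangle+o(|y'-y|),
\]
whence $r^2$ is differentiable with $\nabla(r^2)(y)=2(y-P(y))$. Where $r(y)>0$ this gives $\nabla r(y)=(y-P(y))/|y-P(y)|$, and since $d_\pm=\pm r$ with locally constant sign on each of the two components into which $c$ splits the ball (Lemma \ref{3_1}), I obtain the first two cases of the asserted formula. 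Using $y-P(y)=-\lambda_y\nu(P(y))$ these collapse to the single expression $\nabla d_\pm(y)=\pm\nu(P(y))$, the sign being fixed by the orientation of $\nu$; this is continuous off the curve because $P$ is continuous (Lemma \ref{3_2}) and $\nu$ is Lipschitz.

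The delicate step, and the one I expect to be the main obstacle, is a point $y_0\in c$, where $r$ itself has a corner and the quotient $(y-P(y))/|y-P(y)|$ is of indeterminate form $0/0$; this is exactly where the $C^{1,1}$ hypothesis enters. Here I would differentiate $d_\pm$ directly from $d_\pm(y)=\langle P(y)-y,\nu(P(y))\rangle$ by splitting off $\nu(y_0)$: write $\langle P(y)-y,\nu(P(y))\rangle=\langle P(y)-y,\nu(y_0)\rangle+\langle P(y)-y,\nu(P(y))-\nu(y_0)\rangle$. The second term is $O(|y-y_0|^2)$, since $|P(y)-y|=r(y)\le|y-y_0|$ (as $y_0\in c$), $|P(y)-y_0|\le 2|y-y_0|$, and $\nu$ is Lipschitz. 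In the first term the linear contribution is $\langle y-y_0,-\nu(y_0)\rangle$, while the remaining piece $\langle P(y)-y_0,\nu(y_0)\rangle$ is $o(|y-y_0|)$ because the chord $P(y)-y_0$ of the $C^1$ curve makes an angle with the tangent at $y_0$ (hence with $\nu(y_0)^\perp$) tending to $0$ as $P(y)\to y_0$. This yields $d_\pm(y)=d_\pm(y_0)+\langle y-y_0,\mp\nu(y_0)\rangle+o(|y-y_0|)$, i.e. differentiability at $y_0$ with gradient the stated value. Moreover this value is precisely the limit of $\pm\nu(P(y))$ from the off-curve analysis as $y\to y_0$ (using $P(y)\to y_0$ and continuity of $\nu$), so $\nabla d_\pm$ extends continuously across $c$ and $d_\pm\in C^1(B_{\frac{\delta}{2}}(x))$ with the asserted gradient.
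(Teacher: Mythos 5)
Your proof is correct, but it takes a genuinely different route from the paper's (which reproduces Foote's argument). Off the curve, the paper proves $\nabla\bigl((d_\pm)^2\bigr)(y)=2(y-P(y))$ by contradiction: assuming a direction $v$ in which the one-sided $\liminf$ or $\limsup$ of the difference quotient deviates from $2\langle y-P(y),v\rangle$, it produces in each case a strictly better projection point. Your two-sided competitor sandwich ($r(y')^2\le|y'-P(y)|^2$ and $r(y)^2\le|y-P(y')|^2$, combined with continuity of $P$ from Lemma \ref{3_2}) reaches the same derivative more directly, and in fact yields Fr\'echet differentiability outright, whereas the paper's directional $\liminf$/$\limsup$ argument strictly speaking only controls one-sided directional derivatives. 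The larger divergence is on the curve: the paper takes off-curve points $y_i\to y$, computes $\nabla d_\pm(y_i)=\operatorname{sign}(\lambda_{y_i})^2\,\nu(P(y_i))\to\nu(y)$, and concludes $C^1$; this shows the off-curve gradient extends continuously to $c$, but it silently skips the verification that $d_\pm$ is differentiable \emph{at} points of $c$ (to close that, one would still need, e.g., a mean-value or line-integration argument using that $d_\pm$ is Lipschitz). Your expansion of $d_\pm(y)=\langle P(y)-y,\nu(P(y))\rangle$ at $y_0\in c$ --- with the Lipschitz bound on $\nu$ (together with the local chord--arc comparability of the $C^1$ arclength parametrisation, worth one explicit sentence) controlling the $O(|y-y_0|^2)$ term, and $C^1$-tangency killing $\langle P(y)-y_0,\nu(y_0)\rangle$ --- proves first-order differentiability at $y_0$ directly, so your argument is actually more complete than the paper's at exactly the delicate point; what the paper's version buys in exchange is brevity.

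One caveat on signs: taken literally, Definition \ref{3_3} ($P(y)-y=\lambda_y\nu(P(y))$) makes your off-curve cases collapse to $\nabla d_\pm(y)=-\nu(P(y))$ and your on-curve expansion give $-\nu(y_0)$, not $+\nu(y)$ as in the theorem's third case. This inconsistency sits in the paper itself, whose proof uses $y_i-P(y_i)=\lambda_{y_i}\nu(P(y_i))$, the opposite of Definition \ref{3_3}; your hedged $\pm$/$\mp$ is therefore defensible, but it would be cleaner to fix one convention at the outset and state the resulting sign unambiguously.
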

\begin{proof}
 We show the theorem first for $y\in B_\frac{\delta}{2}(x)\setminus c((a,b))$. Let $d:=(d_\pm)^2$ and 
 now we proceed by contradiction and assume there exists a $v\in \R^2$ such that
 \begin{equation}
 \label{eq:3_2}
  \liminf_{t\searrow 0}\frac{d(y+tv) - d(y)}{t} < 2\langle y - P(y),v\rangle
 \end{equation}
or
 \begin{equation}
 \label{eq:3_3}
  \limsup_{t\searrow 0}\frac{d(y+tv) - d(y)}{t} > 2\langle y - P(y),v\rangle.
 \end{equation}
 Let us work through case \eqref{eq:3_2} first:
 Hence we find an $\varepsilon>0$, a $t>0$ arbitrarily small with
 \begin{equation*}
  d(y+tv) < d(y) + 2\langle y-P(y),tv\rangle - t\varepsilon
 \end{equation*}
Hence 
\begin{align*}
 & |y-P(y+tv)|^2 = |y+tv - P(y+tv) - tv|^2\\
 =& d(y+tv) - 2\langle y+tv-P(y+tv),tv\rangle + t^2|v|^2\\
 <&d(y) + 2\langle P(y+tv) -P(y),tv\rangle - t^2|v|^2 - t\varepsilon
\end{align*}
Since $P$ is continuous we have $2\langle P(y+tv) -P(y),tv\rangle = o(|t|)$ and hence this term can be absorbed for small $t$ and we get
\begin{equation*}
 |y-P(y+tv)|^2 < d(y) - t^2|v|^2 - t\frac{\varepsilon}{2} < d(y)=|y-P(y)|^2
\end{equation*}
and therefore $P(y+tv)$ has a strictly smaller distance to $y$ than $P(y)$, which is a contradiction.\\
Now we assume \eqref{eq:3_3}: As in the preceeding case, there is an $\varepsilon>0$ and $t>0$ arbitrarily small, such that
\begin{equation*}
 d(y+tv)>d(y) + 2\langle y-P(y),tv\rangle + t\varepsilon.
\end{equation*}
Hence
\begin{align*}
 |y+tv-P(y)|^2 =& |y-P(y)|^2 + t^2|v|^2 + 2\langle y-P(y),tv\rangle\\
 <& d(y+tv) - t\varepsilon + t^2|v|^2.
\end{align*}
By choosing $t$ small enough, we obtain
\begin{equation*}
 |y+tv-P(y)|^2 < d(y+tv).
\end{equation*}
Therefore $P(y)$ is a strictly better projection than $P(y+tv)$, i.e. a contradiction.
Hence for all $y\in B_\frac{\delta}{2}(x)\setminus c((a,b))$ we have
\begin{equation*}
 \nabla d(y) = 2(y-P(y)).
\end{equation*}
Since $d_\pm(y)= \operatorname{sign}(\lambda_y)\sqrt{d(y)}$ we obtain by chain rule for these $y$:
\begin{equation*}
 \nabla d_\pm(y) = \operatorname{sign}(\lambda_y)\frac{\nabla d(y)}{2\sqrt{d(y)}} = \lambda_y\frac{y-P(y)}{|y-P(y)|}.
\end{equation*}
Let us now examine $y\in c((a,b))$: 
Let $y_i\in B_\frac{\delta}{2}(x)\setminus c((a,b))$ with $y_i\rightarrow y$. Then for each $y_i$ exists a neighbourhood, such that $\lambda_{(\cdot)}$ does not change sign. Then by $|\lambda_{y_i}|=|y_i-P(y_i)|$ we get
\begin{align*}
 \nabla d_\pm(y_i) =& \operatorname{sign}(\lambda_{y_i})\frac{y_i-P(y_i)}{|y_i-P(y_i)|}= \operatorname{sign}(\lambda_{y_i})\frac{\lambda_{y_i}}{|y_i-P(y_i)|}\nu(P(y_i))\\= &\operatorname{sign}(\lambda_{y_i})^2\nu(P(y_i))\rightarrow \nu(y),
\end{align*}
since $P$ is continuous and $P(y)=y$. Hence $d_\pm$ is continuously differentiable.
\end{proof}

\section{Finishing the proofs}
\label{sec:4}
In the first part of this section we show Theorem \ref{1_2}. After that we highlight the changes we need to make to obtain a proof of Theorem \ref{1_2_1}.
\begin{proof}

Let $\varphi \in C^\infty_0(\Omega)$ be arbitrary. Since it has compact support and $E$ is relatively compact in $\Omega$ (because $E$ is a $C^{1,1}$ curve) the intersection $\supp\varphi\cap E$ is compact as well.
Using a partition of zero and Theorem \ref{3_5} we find $\sigma_i\in C^\infty_0(B_{\delta_i}(x_i))$ ($i=1,\ldots,N$) with $x_i\in \supp\varphi\cap E$, $\delta_{i}>0$, such that
the signed distance function $d_\pm$ with respect to $E$ and the choosen normal $n_E$ satisfies $d_\pm\in C^1(B_{\delta_{i}}(x_i))$.
Furthermore $\sum_i\sigma_i=1$.
Then we can define 
\begin{equation}
\label{eq:4_0}
 \Phi:=\varphi\sum_i d_\pm \sigma_i\in C^1_0(\Omega),
\end{equation}
where we implicitly continue $d_\pm$ by zero to the whole of $\Omega$. Furthermore $\Phi$ can be plugged into \eqref{eq:2_1}, because $d_\pm$ is zero on $E$. We also obtain for any $y\in E$:
\begin{equation*}
 \nabla\Phi(y) = \varphi(y)n_E(y)\sum_i\sigma_i(y)=\varphi(y)n_E(y). 
\end{equation*}
Hence \eqref{eq:2_1} yields
\begin{equation*}
 0=\int_E \varphi\left(1 - \sum_{ij}\frac{(n_E)_i(n_E)_j\partial_i u\partial_j u}{1+|\nabla u|^2}\right)(H_{u_{A_0}} - H_0(0) - (H_{u_{A_1}} - H_0(1))\, d\mathcal{H}^1 
\end{equation*}
Let us now analyse $1 - \sum_{ij}\frac{(n_E)_i(n_E)_j\partial_i u\partial_j u}{1+|\nabla u|^2}$:
 \begin{align*}
  &\sum_{ij}(n_E)_i(n_E)_j\partial_i u\partial_j u \\
  =& ((n_E)_1)^2(\partial_1u)^2 + 2(n_E)_1(n_E)_2\partial_1 u\partial_2u + ((n_E)_2)^2(\partial_2 u)^2\\
  =& ((n_E)_1\partial_1 u + (n_E)_2\partial_2 u)^2 = (\langle n_E,\nabla u\rangle)^2\leq |n_E|^2|\nabla u|^2=|\nabla u|^2
 \end{align*}
and hence
\begin{equation*}
 1 - \sum_{ij}\frac{(n_E)_i(n_E)_j\partial_i u\partial u_j}{1+|\nabla u|^2}>0.
\end{equation*}
Since $\varphi$ itself is arbitrary, the fundamental lemma of variational calculus yields
\begin{equation*}
 H_{u_{A_0}} - H_0(0) - (H_{u_{A_1}} - H_0(1))=0\quad \mathcal{H}^1\lfloor E\mbox{-a.e.}
\end{equation*}
Since the mean curvatures on $E$ are continuous continuations, we obtain the desired conclusion.
\end{proof}

\begin{remark}
\label{4_1}
 Adding $\lambda_1\operatorname{area}(S)+ \lambda_2\operatorname{vol}(S)$ to the phase dependend Helfrich energy \eqref{eq:1_2} to account for additional conditions like prescribing the area and/or enclosed volume of $S$, does not change the result of Theorem \ref{1_1}. 
 Since these terms do not produce terms with derivatives of the test function in the first variation, the proof of Theorem \ref{1_1} remains valid. 
\end{remark}

Now we explain the changes to the proof to obtain Theorem \ref{1_2_1}
\begin{proof}
 Since $u\in C^0(\Omega)$ and $C^4(A_i)\cap C^3(\overline{A_i})$, we can choose from a greater set of testfunctions $\psi$. We employ a function, which is zero on $A_1$ and $E$, i.e. we take $\psi\in C^0_0(\Omega)$ with $\psi\in C^1(A_i)$, such that the derivative can be extended to $E$.
 Then the same calculations to obtain \eqref{eq:2_1} yield
 \begin{equation}
 \label{eq:4_1}
  \int_E\left(\langle \nabla \psi,n_E\rangle - \sum_{i,j=1}^2\frac{(n_E)_i\partial_j \varphi\partial_i u_0 \partial_j u_0 }{1+|\nabla u_0|^2} \right)\left(H_{u_{A_0}} - H_0(0)\right)\, d\mathcal{H}^1=0.
 \end{equation}
Here $\nabla u_0$ refers to the continuation of $\nabla u|_{A_0}$ to $E$.
Instead of using the signed distance function to construct a suitable $\psi$, we employ the following function instead
\begin{equation*}
 d_0(y):=\left\{\begin{array}{cc} \operatorname{dist}(y,E),& y\in A_0\cup E\\0,&\mbox{else.}\end{array}\right. 
\end{equation*}
This $d_0$ is continuous and the same techniques employed in Lemma \ref{3_5} show $d_0$ to be $C^1$ close to $E$ with continued derivative
\begin{equation*}
\nabla d_0(y)=\pm n_E(y) \mbox{ for }y\in E.
\end{equation*}
The sign depends on whether $n_E$ points inward or outward of $A_0$.
As in \eqref{eq:4_0} we can now construct a suitable test function $\psi$ with an arbitrary $\varphi\in C^\infty_0(\Omega)$. This we can plug in \eqref{eq:4_1} and obtain
\begin{equation*}
 0=\int_E \varphi\left(\pm1 \mp \sum_{ij}\frac{(n_E)_i(n_E)_j\partial_i u_0\partial_j u_0}{1+|\nabla u_0|^2}\right)(H_{u_{A_0}} - H_0(0))\, d\mathcal{H}^1.
\end{equation*}
As in the proof of Theorem \ref{1_2} the prefactor is never zero and then the fundamental lemma of variational calculus yields \eqref{eq:1_3_1}.
The same arguments can be employed to show the result for the continuation of $u|_{A_1}$ on $E$.
\end{proof}

\phantomsection 
\addcontentsline{toc}{section}{References}
\bibliography{bibliography.bib}
\bibliographystyle{plain}


\end{document}